\def\E{{\mathbb E }}
\def\P{{\mathbb P}}
 \numberwithin{equation}{section}
\newcommand{\R}{\mathbb R}
\newtheorem{theorem}{Theorem}[section]
\newtheorem{corollary}[theorem]{Corollary}
\newtheorem{remark}[theorem]{\it \rmfamily Remark}
\newtheorem{example}[theorem]{\it \rmfamily Examples}
\newtheorem{lemma}[theorem]{Lemma}
\newtheorem{hypothesis}{\it \rmfamily Hypothesis}
\newtheorem{proposition}[theorem]{Proposition}
\begin{document}

\title{$L^p$-parabolic regularity and non-degenerate 
Ornstein-Uhlenbeck type operators}

\author{ Enrico Priola \footnote{
Research supported by   PRIN project 2010MXMAJR.} 
\\ \\
{Dipartimento di Matematica ``Giuseppe Peano'', } 
\\
{Universit\`a di Torino,}
\\ {via Carlo Alberto 10, 10123 Torino 
(Italy)}
\\ 
{{enrico.priola@unito.it}}
}

%
%
\date{}
\maketitle

\abstract{
 We prove 
$L^p$-parabolic a-priori estimates for $\partial_t u + \sum_{i,j=1}^d
c_{ij}(t)\partial_{x_i x_j}^2 u = f $ on $\R^{d+1}$ when the coefficients
$c_{ij}$  are  locally bounded functions on $\R$. We slightly generalize
the usual parabolicity assumption and show that still  $L^p$-estimates hold for  the
second spatial derivatives of $u$. We also investigate  the dependence of
the constant appearing in such estimates from the parabolicity constant. 
Finally we
extend our estimates to parabolic equations involving  non-degenerate
Ornstein-Uhlenbeck type operators. 
%
}

\vskip 5mm
\noindent \textbf{Keywords} Parabolic equations, a-priori $L^p$-estimates, Ornstein-Uhlenbeck operators  

\vskip 3mm \noindent
\textbf{MSC 2010}  35K10, 35K05


\section{Introduction and basic notations}
\label{sec:1}

In this paper  we deal with global  a-priori $L^p$-estimates
for solutions $u$ to  second order parabolic equations like 
 \begin{equation}
 u_t(t,x) + \sum_{i,j=1}^d c_{ij}(t) u_{x_i x_j}(t,x) 
 = f(t,x),\;\;\;
 (t,x) \in \R^{d+1},
 \label{basic}
\end{equation} 
$d \ge 1$, with locally bounded coefficients $c_{ij}(t)$. 
Here $u_t$ and $u_{x_i x_j}$ denote respectively the first partial
derivative with respect to $t$ and the second partial derivative with respect to
 $x_i $ and $x_j$. 
 We slightly generalize
the usual parabolicity assumption and show that still  $L^p$-estimates hold for  the
second spatial derivatives of $u$. 
  We also investigate   the dependence of
the constant appearing in such  estimates from the symmetric  $d \times d$-matrix $c(t)$ 
 $=\big(c_{ij}(t)\big)_{i,j =1, \ldots, d}$. In the final section we treat more general
equations involving   Ornstein-Uhlenbeck type operators and show that the previous
a-priori estimates are still true.

The $L^p$-estimates we are interested  in are  the following:
 for any $p \in (1, \infty)$, 
there exists $\tilde M >0$ such that, for any
$u \in C^{\infty}_0 (\R^{d+1})$ which solves \eqref{basic}, we have
\begin{equation}
\label{ch2}
 \| u_{x_i x_j} \|_{L^p (\R^{d+1})} \le \tilde M 
 \, \| f \|_{L^p (\R^{d+1})}, \;\;\;  i,j =1, \dots, d,
\end{equation} 
 where the $L^p$-spaces are considered with respect to
the $d+1$-dimensional Lebesgue measure.
Usually, in the literature such a-priori estimates are stated
requiring that   there exists ${\lambda} $ and $\Lambda >0$
such that 
 \begin{equation}
 \lambda |\xi|^2 \le 
\sum_{i,j=1}^d c_{ij}(t) \xi_i \xi_j \le \Lambda |\xi|^2,\;\;\; t \in \R, \xi
\in \R^d,
\label{brut}
\end{equation}
where $|\xi|^2 = \sum_{i=1}^{d} \xi^2_i$. We refer to Chapter 4 in \cite{LSU}, Appendix in \cite{SV}, Section VII.3 in
\cite{L}, which  also assumes that $c_{ij}$ are uniformly continuous, and Chapter
4 in \cite{K3}. The proofs are  based on  parabolic extensions of the   Calderon-Zygmund  theory for
singular integrals (cf. \cite{FR} and \cite{J}). This theory was originally used    to prove a-priori Sobolev estimates for   the Laplace
equation (see \cite{CZ}).
 In the above mentioned references, it is  stated that $\tilde M$ depends not only on $d$,
$p$, $\lambda$ (the parabolicity constant) but also on  $\Lambda$. An attempt to
determine the explicit dependence of $\tilde M$  from $\lambda $ and $\Lambda$ has been
done in  Theorem
A.2.4 of \cite{SV}  finding a quite complicate constant.

The fact that $\tilde M$ is actually \textit{independent} of $\Lambda$ 
is mentioned in Remark 2.5 of \cite{K2}. This property follows from a general
result given in  Theorem 2.2 of \cite{K1}. Once this independence from $\Lambda$
is proved one can use a rescaling argument (cf. Corollary \ref{main}) to show that 
we have
\begin{equation}
\label{depe}
\tilde M = \frac{M_0}{\lambda},
\end{equation} 
for a suitable positive constant $M_0$ depending only on $d$ and $p$.

 In Theorem \ref{kp} and Corollary \ref{main}   we generalize the parabolicity
condition by requiring that the symmetric $d \times d$ matrix
$c(t)  = \big( c_{ij}(t) \big)$ is non-negative definite,
for any $t \in
\R$, and, moreover, that
there exists and integer $p_0$, $1 \le p_0 \le d$,
and ${\lambda} \in (0, \infty) $
such that   
\begin{equation} \label{nondeg8}
 {\lambda}   \sum_{j=1}^{p_0}
\xi_j^2\ \le \, 
\sum_{i,j=1}^d c_{ij}(t) \xi_i \xi_j, \;\; t \in \R, \; \xi \in \R^d
\end{equation}
(cf. Hypothesis \ref{hy1} in Section \ref{sec:2}). We show that \eqref{nondeg8} is enough to get
estimates like \eqref{ch2} for $i, j = 1, \ldots, p_0$, with a constant $\tilde
M$ as in \eqref{depe} (now $M_0$ depends on $p, d$ and $p_0$). An example in which \eqref{nondeg8} holds is 
 \begin{equation} \label{eq2}
u_t(t,x,y) + u_{xx}(t,x,y) +
 t u_{xy} (t,x,y)+ t^2 u_{yy}(t,x,y) = f(t,x,y),
\end{equation}
 $(t,x,y) \in \R^3$  
(see Example \ref{exa}). In this case 
we have an a-priori estimates for $\|  u_{xx}\|_{L^p}$. 

We will first provide a  purely analytic proof of Theorem \ref{kp} in the case of $L^2$-estimates. This is based on Fourier transform techniques.
Then we provide the proof for the general case $1< p < \infty$ in Section 
\ref{p11}. 
 This proof is inspired  by the one of Theorem 2.2 in  
\cite{K1} and requires  the concept of stochastic integral with respect to the Wiener process.   In Section \ref{civuo}
we recall basic properties  of the stochastic integral.  
 It is not clear  how to prove Theorem \ref{kp} for $p \not = 2$ in a purely analytic way. One possibility could be  to follow step by step the proof given in Appendix of \cite{SV} 
trying to improve the constants appearing in the various estimates. 

In Section \ref{ou11} we will extend our estimates to  more general equations
like 
\begin{equation}
\label{ou24}
u_t (t,x) + \sum_{i,j=1}^d c_{ij}(t) u_{x_i x_j}(t,x) +
 \sum_{i,j=1}^d a_{ij} x_j \, u_{x_i}(t,x) 
= f(t,x),
\end{equation}
where  $A= (a_{ij})$ is  a given real $d \times d$-matrix. 
If \eqref{nondeg8} holds with $p_0 =d$  then we show that  estimate 
 \eqref{ch2} is still true with
 $M_0 = M_0 (d,p, T, A) >0$  for any  solution $u \in C^{\infty}_0 ((-T, T) \times \R^d)$ of \eqref{ou24} (see   Theorem \ref{ouma} for a more general statement).

An interesting case of \eqref{ou24} is  
when $c(t)$ is constant, i.e., $c(t) = Q$, $t \in \R$. Then  equation
 \eqref{ou24}   becomes
$$
u_t + {\cal A} u  =f,
$$
where  ${\cal A}$ is  the   Ornstein-Uhlenbeck operator, i.e.,
\begin{equation}
\label{aaa}
{\cal A} v(x) = \text{Tr}(QD^2 v(x)) + \langle Ax, D v(x) \rangle,
\;\;\; x \in \R^{d}, \;\; v \in  C_{0}^{\infty}(\R^{d}).
\end{equation} 
 The operator ${\cal A}$ and its parabolic counterpart ${\cal L} = 
  {\cal A}  - \partial_t $, which is also called Kolmogorov-Fokker-Planck
 operator, have recently  received much attention (see, for instance,  \cite{BCLP1}, \cite{BCLP2}, \cite{DL}, \cite{DZ}, \cite{FL}, \cite{LP}, \cite{MPRS}, \cite{P} and the references therein).  
   The   operator ${\cal A}$   is the  generator 
of the Ornstein-Uhlenbeck  process which 
solves a linear stochastic differential equation (SDE) 
describing the random motion of a
particle in a fluid (see \cite{OU}). 
 Several interpretations in physics and
finance for  $\mathcal{A}$ and  ${\cal L} $ 
 are explained in the survey \cite{Pa}. 
From the a-priori estimates for the parabolic equation \eqref{ou24}
one can deduce  elliptic estimates  like
\begin{equation}
\label{esti114}
\|  v_{x_i x_j} \|_{L^p(\R^{d})} \le \, C_1\,  \big(
 \| {\cal A} v\|_{L^p(\R^{d})} + \| v\|_{L^p(\R^d)} \big),
\end{equation} 
with $C_1 = \frac{M_2(d,p, A)} {\lambda} $,  assuming that ${\cal A}$ is non-degenerate
(i.e., $Q$ is positive definite; see Corollary
 \ref{ouu}). Similar  estimates have been already obtained in \cite{MPRS}. 
 Here  we  can show in addition the precise dependence of the constant $C_1$ from the matrix $Q$.
 
More generally,  estimates  like   \eqref{esti114} 
hold for possibly degenerate hypoelliptic Ornstein-Uhlenbeck operators ${\cal A}$ (see \cite{BCLP1});
 a typical example in $\R^2$ is   
  ${\cal A} v =$ $ q v_{xx} + x v_{y} $  with $q>0$ (cf.  Example \ref{ko}).
In this case we have 
\begin{equation}
\label{like}
\|  v_{x x} \|_{L^p(\R^{2})} \le \, C_1  \big(
 \|  q v_{xx} + x v_{y}\|_{L^p(\R^{2})} + \| v\|_{L^p(\R^2)} \big).
\end{equation}
 Estimates as \eqref{like}  
have been deduced in \cite{BCLP1} by corresponding parabolic estimates for 
$ {\cal A} - \partial_t$, using that such operator  
 is left invariant with respect to a suitable Lie group structure on $\R^{d+1}$ (see \cite{LP}). We also mention  
 \cite{BCLP2} which contains  a generalization of \cite{BCLP1} 
when $Q $  may also depend on $x$ and \cite{P} where 
 the results in \cite{BCLP1} are used 
to study well-posedness of  related SDEs.
 Finally, we point out   that   in the degenerate hypoelliptic case considered in \cite{BCLP1} it is  not  clear how to prove  the precise dependence of the constant appearing in the a-priori $L^p$-estimates from the matrix $Q$.

\bigskip 
We denote by $|\cdot | $ the usual euclidean norm in any $\R^k$, 
 $k \ge 1$. Moreover,
 $\langle \cdot , \cdot \rangle$ indicates the usual inner product in $\R^k$. 

We  denote by $L^p (\R^k)$, $k \ge 1$,  $1< p < \infty$ the usual Banach 
spaces of measurable real functions $f$ such that $|f|^p$ is integrable on
$\R^k$ with respect to the Lebesgue measure. The space of all  
$L^p$-functions $f : \R^k \to \R^j$ with $j > 1$ is indicated with $L^p (\R^k ; \R^j)$.
 Let $H$ be an open set in $\R^k$; $C^{\infty}_0 (H)$ stands for the vector
space of all real $C^{\infty}$-functions $f: H \to \R$ which have  compact
support. 

Let $d \ge 1$. Given a regular function  $u: \R^{d+1} \to \R$, we denote by 
 $D^2_x u (t,x)$ the $d \times d$ Hessian matrix of $u$ with respect to  the spatial variables  at $(t,x) \in \R^{d+1}$,
i.e.,  $D^2_x u (t,x) $ $= (u_{x_i x_j 
} (t,x))_{i,j =1, \ldots, d}$.
 Similarly we define the gradient $D_x u(t,x) \in \R^d$
 with respect to the spatial variables. 

 Given a real $k \times k $ matrix $A$, $\| A \|$ denotes its operator norm
and $Tr(A)$ its trace. 

\smallskip
Let us recall the notion of {\it Gaussian measure} (see, for instance, Section
1.2 in \cite{Bo} or Chapter 1 in \cite{DZ} for more details). Let $d \ge 1$. 
Given a symmetric non-negative definite  $d \times d$ matrix  $Q$, the symmetric
Gaussian
measure $N(0,Q)$ is  the unique Borel probability measure on $\R^d$ such that
its Fourier transform is  
\begin{equation}
\label{gau}
 \int_{\R^d} e^{i \langle x, \xi \rangle} \; N(0, Q) (dx)
= e^{- \langle \xi, Q \xi\rangle},\;\;\; \xi \in \R^d;
\end{equation}
$N(0,Q)$ is  the Gaussian measure with mean 0 and covariance
matrix  $2 Q$. 
 If in addition $Q $ is positive definite than $N(0,Q)$ has the following density $f$ with
respect to the $d$-dimensional Lebesgue measure 
\begin{equation}
\label{den}
f(x) = \frac{1}{ \sqrt {(4 \pi)^n \, \text{det}(Q)}}e^{- \frac{1}{4}\langle
Q^{-1} x, x\rangle},\;\; x \in \R^d.
\end{equation} 
Given two Borel probability measures $\mu_1$ and $\mu_2$ on $\R^d$ 
the convolution $\mu_1 * \mu_2$ is the Borel probability measure  defined as 
$$
 \mu_1 * \mu_2 (B) = \int_{\R^d} \int_{\R^d} 1_{B} (x+ y) \mu_1 (dx) \mu_2 (dy) = \int_{\R^d} \mu_1(dx) \int_{\R^d} 1_{B} (x+ y)  \mu_2 (dy),
$$
for any Borel set $B \subset \R^d$. Here $1_{B}$ is the indicator function of
$B$ (i.e., $1_B(x) = 1 $ if $x \in B$ and $1_B(x) = 0 $ if $x \not \in B$).
 It can be easily verified that 
\begin{equation}
\label{conv1}
 N(0,Q) * N(0, R) = N(0, Q+ R),
\end{equation} 
where $Q+R$ is the sum of the two symmetric  non-negative definite matrices $Q$ and $R$.


\section{A-priori $L^p$-estimates 
}
\label{sec:2}

In this section we consider parabolic equations like \eqref{basic}.

We always assume that the coefficients \textsl{$c_{ij}(t)$ of the symmetric
$d \times d $ matrix $c(t)$ appearing in \eqref{basic}
are (Borel) measurable and locally bounded on $\R$ and, moreover,  that $\langle
c(t) \xi, \xi  \rangle \ge 0$, $t \in \R$, $\xi \in \R^d$}. 
Moreover, we will consider the symmetric non-negative 
 $d \times d$ matrix
\begin{equation}
 C_{sr} = \int_s^r c(t)dt,\;\;\; s \le r,\;\; s,r \in \R.
 \label{crs}
\end{equation} 
We start with a simple representation formula for solutions to equation
\eqref{basic}. This formula is usually obtained assuming that $c(t)$ is
uniformly positive. However there are no difficulties to 
prove it even in the present case when $c(t)$ is only non-negative definite.
\begin{proposition} \label{rap}
Let $u \in C^{\infty}_0(\R^{d+1})$ be a solution to \eqref{basic}. Then we have,
 for $(s,x) \in \R^{d+1}$, 
\begin{equation}\label{rap1}
 u(s,x) = -\int_s^{\infty} dr \int_{\R^d} f(r, x +y ) N(0,{C_{s r}}) (dy).
\end{equation}
\end{proposition}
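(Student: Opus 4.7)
The plan is to take the partial Fourier transform of \eqref{basic} in the spatial variable $x$, reduce the PDE to a family of scalar first-order linear ODEs in $t$ indexed by $\xi \in \R^d$, solve these ODEs by an elementary integrating factor, and finally recognize the resulting multiplier as the characteristic function of the Gaussian measure $N(0,C_{sr})$ via \eqref{gau}.

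First I would set $\hat u(t,\xi) = \int_{\R^d} e^{-i\langle x,\xi\rangle} u(t,x)\, dx$ and analogously define $\hat f$. Since $u \in C^\infty_0(\R^{d+1})$, $\hat u(t,\cdot)$ is rapidly decaying in $\xi$ uniformly in $t$, and $\hat u(t,\xi)$ vanishes for $|t|$ sufficiently large. Applying the spatial Fourier transform to \eqref{basic}, together with $\widehat{u_{x_i x_j}}(t,\xi) = -\xi_i\xi_j\, \hat u(t,\xi)$, yields
\begin{equation*}
\partial_t \hat u(t,\xi) - \langle c(t)\xi,\xi\rangle\, \hat u(t,\xi) = \hat f(t,\xi).
\end{equation*}
For each fixed $\xi$ this is a scalar linear ODE in $t$; since $c$ is locally bounded, the integrating factor $\exp\bigl(-\int_0^t \langle c(\tau)\xi,\xi\rangle\, d\tau\bigr)$ is absolutely continuous. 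Multiplying through, integrating from $s$ to $+\infty$, and using the vanishing of $\hat u$ at infinity, I would arrive at
\begin{equation*}
\hat u(s,\xi) = -\int_s^\infty e^{-\langle C_{sr}\xi,\xi\rangle}\, \hat f(r,\xi)\, dr.
\end{equation*}

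Next, by \eqref{gau} and the symmetry of $N(0,C_{sr})$, the factor $e^{-\langle C_{sr}\xi,\xi\rangle}$ is precisely $\int_{\R^d} e^{-i\langle y,\xi\rangle}\, N(0,C_{sr})(dy)$. By the convolution theorem, the product $e^{-\langle C_{sr}\xi,\xi\rangle}\,\hat f(r,\xi)$ is therefore the spatial Fourier transform of $x \mapsto \int_{\R^d} f(r,x+y)\, N(0,C_{sr})(dy)$ (again using symmetry of $N(0,C_{sr})$ to pass from $x-y$ to $x+y$). Applying Fourier inversion in $\xi$ and interchanging the $r$- and $y$-integrations by Fubini---justified by the rapid decay of $\hat f(r,\cdot)$ in $\xi$ and the compact support of $f$ in $r$---gives \eqref{rap1}.

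The only mild subtlety is that $C_{sr}$ need not be positive definite, so $N(0,C_{sr})$ may fail to be absolutely continuous with respect to Lebesgue measure and the density formula \eqref{den} is unavailable. This is, however, no real obstacle: the Gaussian is handled throughout the argument only via its characteristic function \eqref{gau} and the convolution identity \eqref{conv1}, both of which are valid in the possibly degenerate non-negative definite setting, which is exactly why the standard representation extends from the uniformly positive case to the present one.
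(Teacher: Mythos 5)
Your proposal is correct and follows essentially the same route as the paper: take the partial Fourier transform in $x$, solve the resulting ODE in $t$ to get $\hat u(s,\xi) = -\int_s^\infty e^{-\langle C_{sr}\xi,\xi\rangle}\hat f(r,\xi)\,dr$, identify $e^{-\langle C_{sr}\xi,\xi\rangle}$ as the characteristic function of $N(0,C_{sr})$ via \eqref{gau}, and conclude by uniqueness of the Fourier transform. You simply spell out the ``straightforward computations'' (convolution theorem plus Fubini) that the paper leaves implicit, and your closing remark about the degenerate case is exactly the point of the proposition; the paper's alternative $\epsilon$-regularization argument is not needed.
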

\begin{proof} Let us denote by $\hat u (t, \cdot)$ 
 the Fourier transform of $u(t, \cdot)$ in the space variable $x$. Applying such partial Fourier transform
to  both sides of \eqref{basic} we obtain
$$
\hat u_t(s,\xi)  -  \sum_{i,j =1}^d c_{ij}(s) \xi_i \xi_j  \hat u(s, \xi) 
=
\hat f(s,\xi),
$$
i.e., we have
\begin{equation} \label{krr}
\hat u (s, \xi) = - \int_s^{ \infty} e^{- \langle C_{sr} \xi, \xi \rangle
}  \hat f (r, \xi) dr ,\;\;\; (s, \xi ) \in \R^{d+1}.
\end{equation}
It follows that 
$$
\hat u (s, \xi) = - \int_s^{ \infty} \Big( \int_{\R^d} e^{i \langle x, \xi
\rangle} N(0, C_{sr}) (dx) \Big)  \hat f (r, \xi) dr. 
$$
By some straightforward computations, using also the uniqueness property of the
Fourier transform, we  get 
\eqref{rap1}.

Alternatively, starting from \eqref{krr} one can directly follow  the
computations of pages 48 in \cite{K3} and  obtain \eqref{rap1}.  
 These computations use that there exists 
${\epsilon} >0$ such that $ \langle c(t) \xi , \xi \rangle \ge {\epsilon}
|\xi|^2$, $\xi \in \R^d$.  
We write, for $\epsilon >0$, using the Laplace operator,
$$
 u_t(t,x) + \sum_{i,j=1}^d c_{ij}(t) u_{x_i x_j}(t,x) 
 + \epsilon \triangle u(t,x)
 = f(t,x) + \epsilon \triangle u(t,x),
 $$
 $(t,x) \in \R^{d+1};$ since
 $c(t ) + \epsilon I$ is uniformly  positive, following 
\cite{K3} we find 
\begin{gather*}
u(s,x) = -\int_s^{\infty} dr \int_{\R^d} f(r, y+x) N(0,{C_{s r}} + \epsilon
(r-s)I) (dy) 
\\
-  \epsilon \int_s^{\infty} dr \int_{\R^d} 
\triangle u(r, y+x) N(0,{C_{s r}} + \epsilon (r-s)I) (dy).
\end{gather*}
Using also \eqref{conv1} we get 
\begin{gather*}
u(s,x)
= -\int_s^{\infty} dr \int_{\R^d} N(0,(r-s)I)(dz)\int_{\R^d} f(r, x + y +
\sqrt{\epsilon}\, z) N(0,{C_{s r}}) (dy) 
\\
- \epsilon \int_s^{\infty} dr \int_{\R^d} N(0,(r-s)I)(dz)\int_{\R^d} \triangle u
(r, x + y + \sqrt{\epsilon}\, z) N(0,{C_{s r}}) (dy). 
\end{gather*}
Now we can pass to the limit as $\epsilon \to 0^+$
by the Lebesgue theorem and get \eqref{rap1}.
 
\end{proof}

%
 
The next assumption is a slight generalization of the usual parabolicity
 condition which corresponds to the case $p_0 =d$ (see also Remark
\ref{general}). 
\begin{hypothesis} \label{hy1}
The coefficients $c_{ij}$ are  locally bounded on $\R$ and
 the matrix $c(t) = \big( c_{ij}(t) \big)$ is symmetric  non-negative
definite, $t \in \R$.
 In addition, there exists an integer $p_0$, $1 \le p_0 \le d$, and 
${\lambda}  \in (0, \infty)$
such that   
\begin{equation} \label{nondeg}
\langle c(t) \xi, \xi 
\rangle =\sum_{i,j=1}^d c_{ij}(t) \xi_i \xi_j \ge {\lambda}   \sum_{j=1}^{p_0}
\xi_j^2,\;\;\; t \in \R,\; \xi \in \R^d.
\end{equation} 
\end{hypothesis}
A possible generalization of this hypothesis is given in Remark \ref{general}.  Note that if we introduce  the orthogonal projection 
\begin{equation}
\label{on4}
I_0 : \R^d \to F_{p_0},
\end{equation} 
where $F_{p_0}$ is  the subspace
generated by $\{e_1, \ldots, e_{p_0} \}$  (here $\{ e_i\}_{i=1, \ldots, d}$ denotes the canonical basis in $\R^d$) then \eqref{nondeg7}
can be rewritten as
\begin{equation}
\label{nondeg7}
\langle c(t) \xi, \xi 
\rangle  \ge {\lambda}    |I_0 \xi
|^2,\;\;\; t \in \R, \; \xi \in \R^d.
\end{equation} 
\begin{lemma} \label{ser} Let $g : \R^{d+1} \to \R$ be Borel, bounded, with compact
support
 and such
that 
$g(t, \cdot) \in C_0^{\infty}(\R^{d})$, $t \in \R$. 
Fix $i, j \in \{
1, \ldots,p_0 \}$ and consider
 \begin{equation*}
 w_{ij}(s,x) = -\int_s^{\infty} dr \int_{\R^d} g_{x_i x_j}(r, x+y)
N(0,  I_0 (r-s)) (dy), \; (s,x) \in \R^{d+1},
\end{equation*}
where $I_0$ is defined in \eqref{on4}.
 For any $p \in (1, \infty)$,   there
exists $M_0 = M_0 (d,p, p_0)$ $ >0$,  such that 
\begin{equation}
 \|  w_{ij} \|_{L^p(\R^{d+1})} \le M_0 \| g\|_{L^p(\R^{d+1})}.
\label{ne}
\end{equation} 
\end{lemma}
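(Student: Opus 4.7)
The plan is to exploit the projection structure of $I_0$ to decouple the degenerate direction $F_{p_0}^\perp$ from the non-degenerate one $F_{p_0}$, and then reduce to the classical parabolic $L^p$-estimate in $p_0+1$ variables. Split coordinates as $x = (x', x'')$ with $x' \in F_{p_0} \simeq \R^{p_0}$ and $x'' \in F_{p_0}^\perp \simeq \R^{d-p_0}$, and similarly $y = (y', y'')$. Since $I_0$ is the orthogonal projection onto $F_{p_0}$, the identity
$$\int_{\R^d} e^{i\langle y, \xi \rangle}\, N(0, (r-s)I_0)(dy) = e^{-(r-s)\langle \xi, I_0 \xi\rangle} = e^{-(r-s)|\xi'|^2}$$
combined with \eqref{conv1} identifies $N(0, (r-s)I_0)$ with the product measure $N(0, (r-s)\mathrm{Id}_{\R^{p_0}}) \otimes \delta_0$ on $\R^{p_0} \times \R^{d-p_0}$. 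Consequently the inner integral in the definition of $w_{ij}$ collapses to an integral over $y' \in \R^{p_0}$ alone, with $x''$ appearing merely as a parameter.

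Since $i, j \le p_0$, the derivatives in the integrand act on $x'$; I would pull them outside the integral by dominated convergence, valid because $g$ and all its $x$-derivatives are bounded with compact support. This gives $w_{ij}(s, x', x'') = \partial^2_{x_i x_j} v(s, x', x'')$, where
$$v(s, x', x'') := -\int_s^{\infty} dr \int_{\R^{p_0}} g(r, x'+y', x'')\, N(0, (r-s)\mathrm{Id}_{p_0})(dy').$$
For each fixed $x'' \in \R^{d-p_0}$, $(s, x') \mapsto v(s, x', x'')$ is precisely the standard heat potential in $\R^{p_0+1}$ applied to $g(\cdot, \cdot, x'')$; equivalently, by Proposition \ref{rap} specialized to $d = p_0$ and $c(t) = \mathrm{Id}_{p_0}$, it is the distributional solution of $v_s + \Delta_{x'} v = g(\cdot, \cdot, x'')$ on $\R^{p_0+1}$.

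The final step is to invoke the classical $L^p$ parabolic Calder\'on--Zygmund estimate for the heat operator on $\R^{p_0+1}$ (Chapter~4 of \cite{LSU}, Chapter~4 of \cite{K3}, or Section VII.3 of \cite{L}), which provides a constant $M_0 = M_0(p, p_0)$ such that the linear operator $g \mapsto \partial^2_{x_i x_j} v$ is bounded on $L^p(\R^{p_0+1})$ with norm at most $M_0$. Applied with $x''$ frozen this yields
$$\int_{\R^{p_0+1}} |w_{ij}(s, x', x'')|^p\, ds\, dx' \le M_0^p \int_{\R^{p_0+1}} |g(s, x', x'')|^p\, ds\, dx'$$
for every $x''$, and integrating in $x''$ by Fubini delivers \eqref{ne}.

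The only genuinely conceptual step is the factorization of the degenerate Gaussian $N(0, (r-s)I_0)$ as a non-degenerate $p_0$-dimensional Gaussian tensored with a Dirac mass in the orthogonal directions, since this is what makes the reduction to an honest parabolic problem possible. Beyond that the argument is bookkeeping: commuting the spatial derivatives past the integral, invoking the classical CZ estimate as a black box, and a single Fubini application, so I do not anticipate a serious obstacle.
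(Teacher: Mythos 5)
Your proof is correct and follows essentially the same route as the paper: both reduce to the classical parabolic Calder\'on--Zygmund estimate for the heat operator on $\R^{p_0+1}$ by freezing the variable $x''\in\R^{d-p_0}$ (the paper writes the inner integral directly over $\R^{p_0}$, which is exactly your factorization $N(0,(r-s)I_0)=N(0,(r-s)\mathrm{Id}_{p_0})\otimes\delta_0$), and then integrating in $x''$ by Fubini. The only cosmetic difference is that you commute $\partial^2_{x_ix_j}$ outside the integral to identify $w_{ij}$ with the second derivative of a heat potential, whereas the paper keeps the derivatives on $g$ inside; both are equivalent for the data considered.
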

\begin{proof} If $p_0 =d$ the estimate is classical.
In such case we are dealing with the  heat
equation 
$$
\partial_t u +  \triangle u = g 
$$
on $\R^{d+1}$ and $w_{ij}$ coincides with the second partial derivative with respect to
${x_i}$ and $x_j$ of the heat potential applied to $g$ 
(see, for instance, page 288 in \cite{LSU} or Appendix in \cite{SV}). 
If $p_0 < d$ we write $x = (x', x'' )$ for $x \in \R^{d}$, where $x' \in
\R^{p_0}$ and $x'' \in \R^{d-p_0}$. We get 
\begin{gather*}
w_{ij}(s,x', x'') = - \int_s^{\infty} dr 
\int_{\R^{p_0}} g_{x_i x_j}(r, x'+ \, y', x'')
N(0,  I_{p_0} (r-s)) (dy'), 
\end{gather*}
where $I_{p_0}$ is the identity matrix in $\R^{p_0}$. Let us fix $x'' \in 
 \R^{d-p_0}$
and consider the function $l(t, x') = g(t , x' , x'')$
 defined on $\R \times \R^{p_0}$. By classical estimates for the heat
equation  $\partial_t u $ $+  \triangle u$ $ = l$
 on $\R^{p_0 +1}$ we obtain
$$ 
 \int_{\R^{p_0 +1}} |w_{ij} (s,x', x'')|^p ds dx' \le 
M_0^p 
   \int_{\R^{p_0 +1}} |g (s,x', x'')|^p ds dx'. 
$$
Integrating with respect to $x''$ we get the assertion.
 
\end{proof}

In the sequel we also consider the differential operator $L$
\begin{equation} \label{lu}
 L u(t,x) = \sum_{i,j=1}^d c_{ij}(t) u_{x_i x_j}(t,x),
\;\; (t,x) \in \R^{d+1}, \;\; u \in  C_{0}^{\infty}(\R^{d+1}).
\end{equation}
The next  regularity result when $p_0 =d$
follows by a general result given in Theorem 2.2 of \cite{K1} (cf. Remark
2.5 in \cite{K2}).

In the next two sections we  provide the proof. First we give  a 
direct and self-contained proof in the case $p=2$ by  Fourier transform
 tecniques
(see Section \ref{p2}). 
Then in Section \ref{p11}  we consider the general case.  
The 
proof for $1<p <\infty$ 
is inspired by the one of Theorem 2.2 in  
\cite{K1} and  uses also a probabilistic argument. This argument is used to ``decompose'' a suitable
Gaussian measure in order  to apply   successfully the  Fubini theorem 
(cf. \eqref{crucial} and \eqref{frt5}).

 We stress again  that in the  case of 
  $d = p_0$, usually,  
the next result is stated under the stronger assumption that  \eqref{nondeg}
holds with ${\lambda}  =1$ and also  that $c_{ij}$ are {\it bounded,} i.e.,  
 assuming \eqref{brut} with $\lambda =1$ and   $\Lambda \ge 1 $
(see, for instance, Appendix in  \cite{SV} and \cite{LSU}).

\begin{theorem}\label{kp}  Assume Hypothesis \ref{hy1}
 with ${\lambda}  =1$ in \eqref{nondeg}. Then, for $p \in (1, \infty)$,
there exists  a constant
 $M_0= M_0(d,p, p_0)$ such that, for any $u \in C_0^{\infty}(\R^{d+1})$, 
 $i,j =1, \ldots, p_0$, we have
 \begin{equation}
  \|  u_{x_i x_j} \|_{L^p(\R^{d+1})} \le M_0 \| u_t + Lu\|_{L^p(\R^{d+1})}. 
  \label{new}
 \end{equation} 
\end{theorem}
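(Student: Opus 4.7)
The plan is to start from the explicit representation formula \eqref{rap1} applied to $f=u_t+Lu$, namely $u(s,x)=-\int_s^\infty dr\int_{\R^d} f(r,x+y)\,N(0,C_{sr})(dy)$, and to exploit the parabolicity condition \eqref{nondeg} (with $\lambda=1$) in a crucial algebraic way. The assumption $\langle c(t)\xi,\xi\rangle\ge|I_0\xi|^2$ says that the symmetric matrix $c(t)-I_0$ is non-negative definite for every $t$, so that $R_{sr}=C_{sr}-I_0(r-s)=\int_s^r (c(t)-I_0)\,dt$ is also non-negative definite. The convolution identity \eqref{conv1} then yields the decomposition
$$N(0,C_{sr})=N(0,I_0(r-s))\,*\,N(0,R_{sr}),$$
which isolates exactly the Gaussian factor that appears in the heat-equation estimate of Lemma \ref{ser}.

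For $p=2$ I would argue directly by the Fourier transform. Starting from \eqref{krr} and using $\langle C_{sr}\xi,\xi\rangle\ge(r-s)|I_0\xi|^2$, one has
$$|\hat u(s,\xi)|\le\int_s^\infty e^{-(r-s)|I_0\xi|^2}|\hat f(r,\xi)|\,dr,$$
which, for fixed $\xi$, is a convolution in $s$ against the $L^1(\R)$-kernel $t\mapsto e^{-t|I_0\xi|^2}1_{\{t\ge 0\}}$. Young's inequality gives $\|\hat u(\cdot,\xi)\|_{L^2(\R)}\le|I_0\xi|^{-2}\|\hat f(\cdot,\xi)\|_{L^2(\R)}$. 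Since $i,j\le p_0$, one has $\xi_i^2\xi_j^2\le|I_0\xi|^4$; multiplying, integrating in $\xi$, and applying Plancherel in all variables yields \eqref{new} with $M_0=1$.

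For general $1<p<\infty$, a direct Fubini-type use of the convolution decomposition is obstructed by the fact that $R_{sr}$ depends on both $s$ and $r$. The plan, following \cite{K1}, is to encode $N(0,R_{sr})$ probabilistically. On an auxiliary probability space carrying a standard $d$-dimensional Brownian motion $(B_t)$, introduce $X_t=\int_0^t\sigma(r)\,dB_r$, where $\sigma$ is any measurable, locally bounded matrix field with $\sigma(t)\sigma(t)^T=2(c(t)-I_0)$; then $X_r-X_s$ has law $N(0,R_{sr})$. Writing the decomposition of $N(0,C_{sr})$ as an expectation and differentiating twice in $x$ gives
$$u_{x_ix_j}(s,x)=-\E\int_s^\infty dr\int_{\R^d} f_{x_ix_j}(r,x+y+X_r-X_s)\,N(0,I_0(r-s))(dy).$$
For each sample path $\omega$, set $g_\omega(r,x)=f(r,x+X_r(\omega))$; then the inner expression is exactly the function $w_{ij}$ of Lemma \ref{ser}, built from $g=g_\omega$, evaluated at $(s,x-X_s(\omega))$. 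Taking the $L^p$-norm in $(s,x)$, pulling $\E$ outside by Minkowski, using translation invariance of Lebesgue measure in $x$, and finally Lemma \ref{ser} together with $\|g_\omega\|_{L^p}=\|f\|_{L^p}$ (again by translation) yields \eqref{new}.

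The main obstacle is the stochastic bookkeeping in the probabilistic step: one must choose the square root $\sigma$ so that the It\^o integral $X_t$ is well defined under mere local boundedness of $c$, verify that $X_r-X_s\sim N(0,R_{sr})$, and carefully justify the interchange between the deterministic double integral against $N(0,C_{sr})$ and the expectation over Brownian trajectories. The basic stochastic-integration facts required are precisely those collected by the author in Section \ref{civuo}.
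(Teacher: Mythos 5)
Your proposal is correct and follows essentially the same route as the paper: for general $p\in(1,\infty)$ you use the same decomposition $N(0,C_{sr})=N\bigl(0,(r-s)I_0\bigr)*N\bigl(0,\int_s^r(c(t)-I_0)\,dt\bigr)$, the same representation of the second factor as the law of an increment of a stochastic integral, and the same pathwise application of Lemma \ref{ser} to $g_\omega(t,x)=f(t,x+X_t(\omega))$ followed by translation invariance in $x$ (Minkowski in place of the paper's Jensen--Fubini step is immaterial). Your $p=2$ argument is a mildly simplified version of the paper's Fourier proof (bounding $e^{-\langle C_{sr}\xi,\xi\rangle}$ by $e^{-(r-s)|I_0\xi|^2}$ and applying Young's inequality directly in $s$ instead of the change of variables $t=g_\xi(r)$), and the only bookkeeping you leave implicit is the harmless time shift making $u$ vanish for $t\le 0$, which the paper uses so that the process $b_t$ need only be defined for $t\ge 0$.
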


As a consequence of the previous result we obtain
\begin{corollary} \label{main} Assume Hypothesis \ref{hy1}.
 Then,
for any $u \in C_0^{\infty}(\R^{d+1})$, $p \in (1, \infty)$, 
 $i,j =1, \ldots, p_0$, we have (see \eqref{lu})
 \begin{equation}
  \|  u_{x_i x_j} \|_{L^p(\R^{d+1})} \le \frac{M_0}{{\lambda} } \; \| u_t +
Lu\|_{L^p(\R^{d+1})}. 
  \label{new1}
  \end{equation}
  where $M_0= M_0 (d, p, p_0)$ is the same constant appearing in \eqref{new}.
\end{corollary}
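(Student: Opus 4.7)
The plan is to reduce Corollary \ref{main} to Theorem \ref{kp} by a parabolic rescaling in the time variable, which rescales the coefficient matrix so that the parabolicity constant becomes $1$.

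Concretely, given $u \in C_0^\infty(\R^{d+1})$ satisfying $u_t + Lu = f$, I would introduce $v(t,x) = u(t/\lambda, x)$, which again lies in $C_0^\infty(\R^{d+1})$. A direct chain-rule computation gives $v_t(t,x) = \lambda^{-1} u_t(t/\lambda, x)$ and $v_{x_i x_j}(t,x) = u_{x_i x_j}(t/\lambda, x)$, so that multiplying the original equation at $s = t/\lambda$ by $\lambda^{-1}$ yields
\begin{equation*}
v_t(t,x) + \sum_{i,j=1}^d \tilde c_{ij}(t)\, v_{x_i x_j}(t,x) = \tilde f(t,x),
\end{equation*}
where $\tilde c(t) = \lambda^{-1} c(t/\lambda)$ and $\tilde f(t,x) = \lambda^{-1} f(t/\lambda, x)$. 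Because \eqref{nondeg} for $c$ scales linearly, $\tilde c(t)$ still satisfies Hypothesis \ref{hy1}, now with parabolicity constant equal to $1$; in particular it is symmetric, non-negative definite, and locally bounded, and its lower bound on $F_{p_0}$ is exactly $|I_0 \xi|^2$.

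Theorem \ref{kp} therefore applies to $v$ with constant $M_0 = M_0(d,p,p_0)$, giving
\begin{equation*}
\|v_{x_i x_j}\|_{L^p(\R^{d+1})} \le M_0\, \|\tilde f\|_{L^p(\R^{d+1})}, \qquad i,j = 1,\dots, p_0.
\end{equation*}
The final step is just to undo the change of variables at the level of norms: the substitution $s = t/\lambda$ shows $\|v_{x_i x_j}\|_{L^p}^p = \lambda \|u_{x_i x_j}\|_{L^p}^p$ and $\|\tilde f\|_{L^p}^p = \lambda^{1-p} \|f\|_{L^p}^p$. Dividing and taking $p$-th roots produces the factor $\lambda^{-1}$ in front of $M_0$, which is exactly \eqref{new1}.

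There is no real obstacle here; the argument is a routine parabolic scaling, and the only thing worth checking carefully is that the rescaled coefficients $\tilde c(t)$ still satisfy Hypothesis \ref{hy1} (including the local boundedness on all of $\R$, which is preserved by the dilation $t \mapsto t/\lambda$). All quantitative dependence of the constant is carried explicitly by the $L^p$-norm scalings, so $M_0$ remains the constant from Theorem \ref{kp}, depending only on $d$, $p$, and $p_0$.
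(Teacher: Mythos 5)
Your proof is correct and follows essentially the same strategy as the paper: reduce to Theorem \ref{kp} with parabolicity constant $1$ by a dilation and track the Jacobian factors in the $L^p$-norms. The only (immaterial) difference is that you rescale the time variable, $v(t,x)=u(t/\lambda,x)$ with coefficients $\lambda^{-1}c(t/\lambda)$, whereas the paper rescales the spatial variables, $w(t,y)=u(t,\sqrt{\lambda}\,y)$ with coefficients $\lambda^{-1}c(t)$; both yield exactly the constant $M_0/\lambda$ in \eqref{new1}.
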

\begin{proof}  Let us define $w(t,y) = u(t, {\sqrt{{\lambda} }}\, y )$. Set $f =
u_t + Lu$; since
$u(t,x) = w(t, \frac{x}{\sqrt{{\lambda} }})$, we find 
\begin{equation*}
f (t, {\sqrt{{\lambda} }} \, y) = w_t(t,y) +  \frac{1}{{\lambda} } \, Lw (t,y)
\end{equation*}
Now the matrix $(\frac{1}{{\lambda} } c_{ij})$ 
satisfies $\frac{1}{{\lambda} }\sum_{i,j=1}^d c_{ij}(t) \xi_i
\xi_j \ge  \sum_{j=1}^{p_0} \xi_j^2,$ $ t \in \R,$ $ \xi \in \R^d.$ Applying
Theorem \ref{kp} to $w$ we find 
\begin{equation*}
 \| w_{x_i, x_j}  \|_{L^p} \le {M_0}{{\lambda} ^{- \frac{d}{2p}}}  \| f \|_{L^p}
\end{equation*}
and so 
\begin{equation*}
 {\lambda} ^{1 - \frac{d}{2p}} \| u_{x_i, x_j}  \|_{L^p} \le M_0 
{\lambda^{-\frac{d}{2p}}}  \| f
\|_{L^p}
\end{equation*}
which is the assertion.
 
\end{proof}

\begin{example} \label{exa} {\em The equation  \eqref{eq2}
verifies the assumptions of Corollary \ref{main} with $p_0=1$ and $\lambda = 3/4$ since
 $$
 \sum_{i,j=1}^2 c_{ij}(t) \xi_i \xi_j  
 = \xi_1^2 + t \xi_1 \xi_2 + t^2 \xi^2_2 
 \ge \frac{3}{4} \xi_1^2, \;\;
 (t,\xi_1, \xi_2) \in \R^{3}.
$$
 Hence  there exists $M_0 >0$ 
 such that if $u \in
C^{\infty}_0(\R^3)$ solves \eqref{eq2} then
$$
  \|  u_{x x} \|_{L^p(\R^{3})} \le \frac{M_0}{{\lambda} } 
  \; \| f \|_{L^p(\R^{3})}. 
$$ }
\end{example}

\begin{remark} \label{general} {\em One can easily generalize  Hypothesis
\ref{hy1} as follows:

 the coefficients $c_{ij}$ are  locally bounded on $\R$ and, moreover, there
exists an orthogonal projection $I_0 : \R^d \to \R^d $ and  
${\lambda} >0$
such that, for any $t \in \R$, a.e.,   
\begin{equation} \label{nondeg2}
\langle c(t) \xi, \xi 
\rangle 
\ge {\lambda}  |I_0\xi|^2,\;\; \xi \in \R^d.
\end{equation} 
Theorem \ref{kp} and Corollary \ref{main} continue to hold under this 
assumption. 

Indeed following the proof of Theorem \ref{kp} it is clear that  assertion \eqref{new}  can be obtained if assumption \eqref{nondeg} is satisfied  only for $t \not \in  B$ where $B \subset \R$ is a Borel set of Lebesgue measure zero.  
 Moreover, if  \eqref{nondeg2} holds then  by a suitable linear change of variables in equation
\eqref{basic} we may assume that 
$I_0 (\R^d)$ is the linear subspace generated by $ \{ e_1, \ldots, e_{p_0} \}$ 
 for some $p_0$ with $1 \le p
\le d$ and so    apply Theorem \ref{kp}.
 
 Under hypothesis \eqref{nondeg2}  assertion \eqref{new} in Theorem
\ref{kp} becomes
$$
 \|  \langle D_x^2 u (\cdot)  h, k \rangle \|_{L^p(\R^{d+1})} \le M_0 \| u_t +
Lu\|_{L^p(\R^{d+1})},
$$
where $h, k \in I_0 (\R^d)$.}
\end{remark}

\subsection{Proof of Theorem \ref{kp} when $p=2$}
\label{p2}

This  proof  is inspired by the one of  Lemma A.2.2
in \cite{SV}.
 Note that  such lemma  has $p_0=d$ and, moreover,  it assumes  the stronger condition  
  \eqref{brut}. In Lemma A.2.2  the  constant $M_0$ appearing in 
\eqref{new} is  $ 2 \sqrt{\Lambda}$.

 We start from \eqref{krr} with
$$
f =  u_t + Lu.
$$
Recall that  for $g : \R^{d+1} \to \R$, 
$\hat g (s, \xi)$ denotes the Fourier transform of $g(s, \cdot) $ with respect to the $x$-variable
($s \in \R$, $\xi \in \R^d$) assuming that $g(s, \cdot) \in L^1(\R^d)$. 

Let us fix $s \in \R$.
Let $i,j = 1, \ldots, p_0$. We easily compute the Fourier transform of $u_{x_i
x_j}(s, \cdot)$ (the matrix $C_{sr}$ is defined in \eqref{crs}):
\begin{gather*} 
\hat u_{x_i x_j} (s, \xi) =  - \xi_i \xi_j \, \hat u (s, \xi) 
=  \xi_i \xi_j   \int_s^{ \infty} e^{- \langle C_{sr}\,  \xi, \xi \rangle
}  \hat f (r, \xi) dr ,\;\;\;  \xi  \in \R^{d}.
\end{gather*}
Since $|I_0 \xi|^2 = \sum_{i=1}^{p_0} |\xi_i|^2 $, we  get 
$$
2 |\hat u_{x_i x_j} (s, \xi)| \le |I_0 \xi|^2
  \int_s^{ \infty} e^{- \big( \langle C_{0r} \, \xi, \xi \rangle - 
 \langle C_{0s} \,  \xi, \xi \rangle \big)
}  |\hat f (r, \xi)| dr =  G_{\xi}(s). 
$$ 
Now we fix $\xi \in \R^d$, such that $|I_0\xi| \not = 0$, and define  
$$
g_{\xi} (r)
 = \langle C_{0r} \, \xi, \xi \rangle 
  = \int_0^r \langle c(p) \xi , \xi \rangle dp, \;\; r \in \R.
$$
 Changing variable $t = g_{\xi} (r),$ we get 
$$
G_{\xi}(s) = |I_0 \xi|^2
  \int_{g_{\xi} (s)}^{ \infty} e^{ ( g_{\xi} (s) \,  - \, t)
}  \,  |\hat f ( g_{\xi}^{-1} (t) , \xi)| \frac{1}{\langle c( g_{\xi}^{-1} (t))
\xi, \xi \rangle } dt .
$$ 
 Let us introduce $\varphi (t) = e^{t} \cdot 1_{(- \infty, 0)}(t) $, $t \in \R$,
 and 
$$
F_{\xi}(t) =   |I_0 \xi|^2 \, |\hat f ( g_{\xi}^{-1} (t) , \xi)|
\frac{1}{\langle c( g_{\xi}^{-1} (t)) \xi, \xi \rangle }.
$$
Using the standard convolution for real functions defined on $\R$ we find
$$
G_\xi (s) = (\varphi * F_\xi ) \, (g_{\xi} (s)). 
$$
  Therefore (recall \eqref{nondeg7} with ${\lambda}  =1$)
\begin{equation}
\label{l2}
\| G_\xi \|_{L^2(\R)}^2 = \int_{\R}
 | (\varphi * F_\xi)(t)|^2 \frac{1}{\langle c( g_{\xi}^{-1} (t)) \xi, \xi
\rangle } dt 
\le \frac{1}{ |I_0 \xi|^2 }\| \varphi * F_\xi \|_{L^2 (\R)}^2.
\end{equation}
which implies $\| G_\xi \|_{L^2(\R)}  
\le \frac{1}{   |I_0 \xi| }\| \varphi * F_\xi \|_{L^2 (\R)}.
$ On the other hand,
using the Young inequality, we find, for any $\xi \in \R^d$ with $|I_0 \xi| \not = 0$,
\begin{gather*}
\| \varphi * F_\xi  \|_{L^2 (\R)} \le  \| \varphi \|_{L^1(\R)} 
\, \|   F_\xi  \|_{L^2 (\R)} = \|   F_\xi  \|_{L^2 (\R)}
\\
 = |I_0 \xi|^2  \Big( \int_{\R}
|\hat f ( g_{\xi}^{-1} (t) , \xi)|^2 
\frac{1}{(\langle c( g_{\xi}^{-1} (t)) \xi, \xi \rangle)^2 } dt \Big)^{1/2} 
\\
=  |I_0 \xi|^2  \Big( \int_{\R}
|\hat f ( r , \xi)|^2 
\frac{1}{(\langle c( r) \xi, \xi \rangle)^2 } \langle c( r) \xi, \xi \rangle 
dr \Big)^{1/2} 
\\
\le \frac{|I_0 \xi|^2}{  |I_0 \xi| }  \Big( \int_{\R}
|\hat f ( r , \xi)|^2 
  \Big)^{1/2}  = {|I_0 \xi|} \cdot 
\| \hat f ( \cdot , \xi ) \|_{L^2(\R)}.
\end{gather*}
Using also \eqref{l2} we obtain, for any $\xi \in \R^d$, $|I_0 \xi| \not =0$,
\begin{gather*}
2 \| |\hat u_{x_i x_j} (\cdot, \xi) \|_{L^2(\R)} 
 \le \| G_\xi \|_{L^2(\R)}  \le \, 
 \| \hat f ( \cdot , \xi ) \|_{L^2(\R)}.
\end{gather*}
From the  previous inequality, integrating  with respect to $\xi$ over $\R^d$ we find
$$
4 \int_{\R} ds \int_{\R^d} |\hat u_{x_i x_j} (s, \xi)|^2 d \xi
 \le \,   \int_{\R} ds \int_{\R^d} |\hat f (s, \xi)|^2 d \xi.  
$$
By using the  Plancherel  theorem in $L^2(\R^d)$ we easily obtain
\eqref{new} 
 with $M_0 =1/2$. The proof is complete.  

\subsection{Proof of Theorem \ref{kp} when $1 < p < \infty$}
\label{p11}

The proof uses the concept of stochastic integral in a crucial point (see
\eqref{crucial} and \eqref{frt5}). Before starting the proof  we collect  some  basic properties
of the stochastic integral with respect to the Wiener process which are needed
(see, for instance, Chapter 4 in \cite{A} or Section 4.3 in \cite{SV} for more details).

\subsubsection {The stochastic integral}
\label{civuo}  
Let $W = (W_t)_{t \ge 0}$ be a standard $d$-dimensional Wiener process
defined on a probability space $(\Omega, {\cal F}, \P) $. Denote by $\E$ the
expectation with respect to $\P$.  

Consider a function $F \in L^2 ( [a,b];
\R^{d} \otimes \R^d)$ (here $0 \le a\le b $ and $\R^{d} \otimes \R^d$ denotes
the space of all real $d \times d $-matrices). 

Let $(\pi_n)$ be any sequence of partitions of $[a,b]$ such that 
$|\pi_n| \to 0$ as $n \to \infty$ (given a partition  $\pi = \{ t_0 =a$, ...
$,t_N =b \}$ we set
 $|\pi| = \sup_{t_k  , t_{k+1} \in \pi  }  |t_{k+1} - t_k|$).
 One defines the
stochastic integral $\int_a^b F(s) dW_s$
as the  limit in $L^2 (\Omega, \P; \R^d)$ of 
$$
J_n = \sum_{t_k^n,\, t_{k+1}^n \in \pi_n} F(t_{k}^n) \, 
(W_{t_{k+1}^n} - W_{t_{k}^n} ).
$$
as $n \to \infty$ (recall that the previous formula  means
 $$J_n (\omega) = \sum_{t_k^n,\, t_{k+1}^n \in \pi_n} F(t_{k}^n) \, 
(W_{t_{k+1}^n}(\omega)  - W_{t_{k}^n}(\omega) ),
$$ for any $\omega \in
\Omega$). 
 One can prove that the previous limit is independent of the choice of
$(\pi_n)$. Moreover, we have, $\P$-a.s.,  
\begin{equation} \label{stoc}
\int_a^b F(s) dW_s = \int_0^b F(s) dW_s - 
\int_0^a F(s) dW_s. 
\end{equation} 
 Set $\Gamma_{ab} = \int_a^b F(s)F^*(s) ds$ where $F^*(s)$ denotes the adjoint
matrix of $F(s)$. Clearly, $\Gamma_{ab}$ is a $d \times d$ symmetric 
non-negative definite matrix.  
 Moreover, we have (see, for instance, page 77 in \cite{A})
\begin{equation} \label{equ}
\E \big[ e^{i \, \sqrt{2}\langle \int_a^b F(s) dW_s, \xi \rangle} \big]
 =  \int_{\Omega} e^{i \, \sqrt{2}\, \langle \big(\int_a^b F(s) dW_s\big)
(\omega)\, , \, \xi \rangle}\,\, \P (d \omega) 
\end{equation}
$$
= \int_{\R^d} e^{i \langle x, \xi \rangle} \; N(0, \Gamma_{ab}) (dx)
= e^{- \langle \xi,  \Gamma_{ab} \xi\rangle},\;\;\; \xi \in \R^d. 
$$
Formula \eqref{equ} is equivalent to require that 
  for any Borel and bounded $f : \R^d \to \R$, 
\begin{equation} \label{se3}
 \E \Big[f \Big( \sqrt{2} \int_a^b F(s) dW_s \Big) \Big] = \int_{\R^d} f( y)
N(0,{\Gamma_{ab}}) (dy). 
\end{equation} 
Equivalently, one can say that  the law (or image measure) of $
\sqrt{2}\int_a^b F(s) dW_s$ 
is $N(0,\Gamma_{ab}) $.

\subsubsection{Proof of the theorem}

It is convenient to suppose that
 $u(t, \cdot) =0$ if $t \le 0 $ so that
$u \in C^{\infty}_0([ 0, \infty)\times \R^{d})$.

Indeed 
 if  $u(t, \cdot) =0$, $t \le T $, for some $T \in \R$, then  we can introduce 
 $v(t,x) = u(t+ T,x)$ which belongs to  $u \in C^{\infty}_0( [ 0, \infty)\times
\R^{d})$; from the a-priori estimate for $v_{x_i x_j}$ it follows \eqref{new} since $
 \| v_{x_i x_j} \|_{L^p(\R^{d+1})} $ $ =   \| u_{x_i x_j} \|_{L^p(\R^{d+1})}.
$

 We know that, for $s \ge 0$.  $x \in \R^{d}$,
\begin{equation*}
 u(s,x) = -\int_s^{\infty} dr \int_{\R^d} f(r, x+ y) N(0,{C_{sr}}) (dy),
\end{equation*}
where $f = u_t + Lu $ 
is bounded, with compact support on $\R^{d+1}$ and such
that 
$f(t, \cdot) \in C_0^{\infty}(\R^{d})$, $t \ge 0$. 
Let us fix $i, j \in \{
1, \ldots,p_0 \}$.

Differentiating under the integral sign  it is
not difficult to prove that 
\begin{equation*}
 u_{x_i x_j}(s,x) =- \int_s^{\infty} dr \int_{\R^d} f_{x_i x_j}(r, x+y)
N(0,{C_{sr}}) (dy).
\end{equation*}
Let us fix $s$ and $r$, $0 \le s \le r$, and consider 
\begin{equation*}
C_{sr} =   A_{sr} + (r-s)I_0,\;\;\; \text{where} \; 
 A_{sr} = \int_{s}^{r} (c(t) - I_0)dt.
\end{equation*}
By \eqref{conv1} 
we know that $N(0, C_{sr}) = N(0, A_{sr}) * N(0, (r-s)I_0) $ and so
\begin{equation} \label{ci1}
  \int_{\R^d} f_{x_i x_j}(r, x+y)
N(0,{C_{sr}}) (dy)
\end{equation}
$$
 = \int_{\R^d} N(0,{A_{sr}}) (dz)
\int_{\R^d} f_{x_i x_j}(r, x+ y + z) N(0, (r-s)I_0)(dy).
$$
Now we introduce a standard $d$-dimensional Wiener process $W = (W_t)_{t \ge 0}$
on a  probability space $(\Omega, {\mathcal F}. \P)$ (see Section \ref{civuo}).
Consider 
the symmetric $d \times d$ square root $\sqrt{c(t)-
I_0}$ of $c(t)-I_0$ and define the stochastic integral
$$
\Lambda_{sr}= \sqrt{2} \int_s^r \sqrt{c(t) -I_0} \; dW_t.
$$
By \eqref{stoc} we know that 
$$ \Lambda_{sr}  = b_r  - b_s, \;\; \text{where} 
\; b_t = \sqrt{2}\int_0^t  \sqrt{c(p) -I_0} \; dW_p,
$$ 
 $t \ge 0$, and 
 $b_t =0$ if $t \le 0$.  Moreover (cf. \eqref{se3}) 
 for any Borel and bounded $g : \R^d \to \R$, we have
\begin{equation} \label{crucial}
 \E[g(b_r - b_s)] = \int_{\Omega} g\big( b_r(\omega) \,  - \,  b_s (\omega)\big) \P (d \omega)
 =
 \int_{\R^d} g( y) N(0,{A_{sr}}) (dy). 
\end{equation}  
 Using this fact and  the
 Fubini theorem we get from \eqref{ci1}
\begin{gather} 
\nonumber  \int_{\R^d} f_{x_i x_j}(r, x +y)
N(0,{C_{sr}}) (dy)
\\
\nonumber = \E \Big [ \int_{\R^d} 
 f_{x_i x_j}(r, x+ y+ \Lambda_{rs}) \, N(0, (r-s)I_0)(dy) \Big]
\\
= \E \Big [ \int_{\R^d} 
 f_{x_i x_j}(r, x+ y + b_r - b_s) \, N(0, (r-s)I_0)(dy)
 \Big].
 \label{frt5}
 \end{gather}
Therefore we find
\begin{equation}
u_{x_i x_j}(s,x) = -\E \Big [ \int_s^{\infty}dr \int_{\R^d} 
 f_{x_i x_j}(r, x+ y + b_r - b_s) N(0, (r-s)I_0)(dy)
 \Big].
\end{equation} 
Now we estimate the $L^p$-norm of $u_{x_i x_j}$. To simplify the notation
in the sequel
we set 
 $N\big(0, (r-s)I_0\big) = \mu_{sr}$.
 Using the Jensen
inequality and the Fubini theorem we get 
\begin{gather*}
 \int_{\R_+} ds \int_{\R^d}|u_{x_i x_j}(s,x)|^{p}dx 
 \\
 =\int_{\R_+} ds \int_{\R^d} \left| \E \Big [  \int_s^{\infty}dr
\int_{\R^d} 
 f_{x_i x_j}(r, x+ y + b_r - b_s) \mu_{sr} (dy)
 \Big] \right|^p dx
\\
\le  \E \Big [ \int_{\R_+} ds \int_{\R^d}   \Big|  \int_s^{\infty}dr
\int_{\R^d} 
 f_{x_i x_j}(r, x+ y + b_r - b_s)
 \mu_{sr} (dy) \Big|^p dx \Big].
\end{gather*}
Now in the last line of the previous formula we  change variable in
the   integral over $\R^d$ with respect to 
the $x$-variable; we  
 obtain
  \begin{equation} \label{fgh}
 \int_{\R_+} ds \int_{\R^d}|u_{x_i x_j}(s,x)|^{p}dx 
 \end{equation}$$ 
 \le    \E \Big [ \int_{\R_+} ds \int_{\R^d}   \Big|  \int_s^{\infty}dr
\int_{\R^d} 
 f_{x_i x_j}(r, z + y + b_r )
 \mu_{sr} (dy) \Big|^p dz \Big].
$$
To estimate the last term we fix  $\omega \in \Omega$ and consider 
the  function $g_{\omega}(t,x)  = f(t, x + b_t(\omega) )$, $(t,x)
\in \R^{d+1}$. 
The function $g_{\omega}$ is bounded,
with compact support on $\R^{d+1}$ and such
that 
$g_{\omega}(t, \cdot) \in C_0^{\infty}(\R^{d})$, $t \in \R$. 

By Lemma \ref{ser} we know  
that there exists $M_0 = M_0
(d,p, p_0)>0$ such that, for any $\omega \in
\Omega$,
\begin{gather*}
\int_{\R_+} ds \int_{\R^d}   \Big|  \int_s^{\infty}dr
\int_{\R^d}  
 f_{x_i x_j} \big(r, z + y + b_r  (\omega) \big)
 \mu_{sr} (dy) \Big|^p dz
\\
= \int_{\R_+} ds \int_{\R^d}   \Big|  \int_s^{\infty}dr
\int_{\R^d}  
 \partial^2_{x_i x_j}g_{\omega} \big(r, z + y  \big)
 \mu_{sr} (dy) \Big|^p dz
\le M_0^p \,  \|
g_{\omega}\|_{L^p}^p. 
\end{gather*}
Using also \eqref{fgh} we find 
\begin{gather*}
  \int_{\R_+} ds \int_{\R^d}|u_{x_i x_j}(s,x)|^{p}dx
  \le M_0^p\,  \E \Big[ \int_{\R} ds \int_{\R^d}|g_{\omega}(s,x)|^{p}dx \Big]
\\
= M_0^p  \, \E \Big[ \int_{\R} ds \int_{\R^d}|f(s,x + b_s)|^{p}dx \Big]
\\
= M_0^p  \,  \int_{\R} ds \int_{\R^d}|f(s, z)|^{p}dz. 
  \end{gather*}
The proof is complete.  
%


\section {$L^p$-estimates involving Ornstein-Uhlenbeck operators}
 \label{ou11}

Let $A= (a_{ij})$ be a given real $d \times d$-matrix. We  
consider the
following 
Ornstein-Uhlenbeck type operator
\begin{equation*}
L_0 u(t,x) =   \sum_{i,j=1}^d c_{ij}(t) u_{x_i x_j}(t,x) +
 \sum_{i,j=1}^d a_{ij} x_j \, u_{x_i}(t,x)
\end{equation*} 
$$ 
 = \text{Tr}(c(t)D^2_x u(t,x)) + \langle Ax, D_x u(t,x) \rangle,
$$
$ (t,x) \in \R^{d+1}, \;\; u \in  C_{0}^{\infty}(\R^{d+1})$. 
This is a kind of perturbation of  $L$ given in  \eqref{lu} by  the first
order term $\langle Ax, D_x u(t,x) \rangle $ which has linear coefficients.

We will extend Corollary \ref{main} to cover the parabolic equation 
\begin{equation} \label{ou1}
 u_t (t,x) + L_0 u(t,x) = f(t,x) 
\end{equation}  
on $\R^{d+1}$. We will assume Hypothesis \ref{hy1} and also 

\begin{hypothesis} \label{hy2}
 Let $p_0 $ as   in Hypothesis \ref{hy1}. Define 
 $F_{p_0} \simeq \R^{p_0} $ as the linear subspace generated by 
 $\{e_1, \ldots, e_{p_0} \}$. Let $F^{p_0} $ be the linear
subspace generated by 
 $\{e_{p_0 +1}, \ldots, e_{d} \}$ if $p_0 <d$ (when $p_0 =d$, $F^{p_0} = \{
0\}$). We suppose
 that 
\begin{equation}
 A(F_{p_0}) \subset F_{p_0}, \;\;\;\; A(F^{p_0}) \subset F^{p_0}.
  \label{hy22}
\end{equation} 
\end{hypothesis} 
Recall that given a $d \times d$-matrix $B$,  
$\| B\|$ and $Tr(B)$ denote, respectively, the operator norm and the trace
of $B$. In the next result we will use that there exists $\omega >0$ and $\eta >0$ such that
\begin{equation}
\label{om1}
 \| e^{tA} \| \le \eta e^{\omega |t|},\;\;\; t \in \R,
\end{equation} 
where $e^{tA} $ is the exponential matrix of $tA$. Note that the constant $M_0$ below is the same given in \eqref{new}.


\begin{theorem}
\label{ouma}  Assume Hypotheses \ref{hy1} and \ref{hy2}.
Let $T>0$ and set $S_T = (-T, T)
\times \R^d  $. Suppose that $u \in C^{\infty}_0 (S_T)$. For any
$p \in (1, \infty)$, 
 $i,j =1, \ldots, p_0$,
\begin{align}
\label{esti1}
\|  u_{x_i x_j} \|_{L^p(\R^{d+1})} \le \,   \frac{M_1(T) }
{{\lambda} }\,  \| u_t + L_0 u\|_{L^p(\R^{d+1})}; \; \; 
\\ \;  
\nonumber \text{with} \;\; M_1(T) = c (d)  {M_0}  \eta^4  e^{4 T \omega   +  \frac{2T}{p} \, |\text{Tr}(A)| }.
\end{align} 
\end{theorem}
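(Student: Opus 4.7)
The plan is to reduce the claim to Corollary \ref{main} via the classical change of variables
\begin{equation*}
v(t,x) = u(t, e^{tA}x), \qquad u(t,y) = v(t, e^{-tA}y),
\end{equation*}
which removes the linear drift. A direct chain-rule computation shows that the contribution of $\langle A y, D_y u(t,y)\rangle$ exactly cancels the extra term $\langle D_x v(t,x), -A e^{-tA}y\rangle$ produced by the $t$-dependence of the coordinate change, so that $v$ satisfies the driftless equation
\begin{equation*}
v_t(t,x) + \text{Tr}\bigl(\tilde c(t)\, D^2_x v(t,x)\bigr) = \tilde f(t,x), \qquad (t,x)\in \R^{d+1},
\end{equation*}
where $\tilde c(t) = e^{-tA}\, c(t)\, (e^{-tA})^*$ and $\tilde f(t,x) = f(t, e^{tA}x)$. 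Since $u\in C^\infty_0((-T,T)\times\R^d)$, the same is true of $v$ and $\tilde f$.

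The crucial step is to check that $\tilde c(t)$ still verifies Hypothesis \ref{hy1} on $[-T,T]$, with the parabolicity constant degraded only by a factor of $\eta^2 e^{2\omega T}$. By Hypothesis \ref{hy2}, both $F_{p_0}$ and $F^{p_0}$ are $A$-invariant; being orthogonal complements of each other, they are automatically $A^*$-invariant, and hence invariant under $e^{tA}$ and $(e^{tA})^*$ for every $t\in\R$. Consequently $(e^{\pm tA})^*$ commutes with the orthogonal projection $I_0$ onto $F_{p_0}$, and using \eqref{om1}:
\begin{equation*}
\langle \tilde c(t)\xi,\xi\rangle = \langle c(t)(e^{-tA})^*\xi,(e^{-tA})^*\xi\rangle \ge \lambda\,|I_0(e^{-tA})^*\xi|^2 = \lambda\,|(e^{-tA})^* I_0 \xi|^2 \ge \frac{\lambda}{\eta^2 e^{2\omega T}}\,|I_0\xi|^2
\end{equation*}
for $|t|\le T$ and all $\xi \in \R^d$. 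Outside $[-T,T]$, both $v$ and $\tilde f$ vanish identically, so I may extend $\tilde c$ by any symmetric non-negative matrix respecting the same lower bound (e.g.\ $\tfrac{\lambda}{\eta^2 e^{2\omega T}} I_0$), obtaining a global coefficient matrix $\hat c$ satisfying Hypothesis \ref{hy1} with parabolicity constant $\tilde\lambda = \lambda/(\eta^2 e^{2\omega T})$.

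Corollary \ref{main} applied to $v$ then yields, for any $k,l\in\{1,\dots,p_0\}$,
\begin{equation*}
\|v_{x_k x_l}\|_{L^p(\R^{d+1})} \le \frac{M_0\, \eta^2 e^{2\omega T}}{\lambda}\, \|\tilde f\|_{L^p(\R^{d+1})}.
\end{equation*}
It remains to translate between $(u,f)$ and $(v,\tilde f)$. The change of variables $y = e^{tA}x$ has Jacobian $|\det e^{tA}| = e^{t\,\text{Tr}(A)}$, bounded by $e^{T|\text{Tr}(A)|}$ on $[-T,T]$, which immediately gives $\|\tilde f\|_{L^p} \le e^{T|\text{Tr}(A)|/p}\|f\|_{L^p}$. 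To recover $u_{x_i x_j}$ for $i,j\in\{1,\dots,p_0\}$, write $e^{-tA} e_i = \sum_{k=1}^{p_0}\alpha_{ki}(t) e_k$ (which belongs to $F_{p_0}$ by Hypothesis \ref{hy2}) with $|\alpha_{ki}(t)| \le \eta e^{\omega T}$; then $u_{x_i x_j}(t,y) = \sum_{k,l=1}^{p_0} \alpha_{ki}(t)\alpha_{lj}(t)\, v_{x_k x_l}(t, e^{-tA}y)$, and another Jacobian computation gives $\|u_{x_i x_j}\|_{L^p} \le c(d)\,\eta^2 e^{2\omega T}\, e^{T|\text{Tr}(A)|/p}\, \max_{k,l}\|v_{x_k x_l}\|_{L^p}$. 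Composing these three inequalities produces the claim with $M_1(T) = c(d)\,M_0\,\eta^4\, e^{4T\omega + (2T/p)|\text{Tr}(A)|}$.

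The main obstacle is the coercivity step: the conjugation $c(t)\mapsto e^{-tA} c(t) (e^{-tA})^*$ would in general mix all coordinates, and Hypothesis \ref{hy2} is precisely what prevents the transformation from leaking directions from the potentially degenerate subspace $F^{p_0}$ into the coercive subspace $F_{p_0}$, thereby preserving the partial non-degeneracy along $F_{p_0}$. Everything else is routine bookkeeping of the norms of $e^{tA}$ and the Jacobian $e^{t\,\text{Tr}(A)}$ of the change of variables.
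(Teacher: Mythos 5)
Your proposal is correct and follows essentially the same route as the paper's proof: the change of variables $v(t,y)=u(t,e^{tA}y)$ to remove the drift, the use of Hypothesis \ref{hy2} (invariance of $F_{p_0}$ and $F^{p_0}$ under $A^*$, hence commutation of $I_0$ with $e^{\pm tA^*}$) to show the transformed coefficients satisfy the non-degeneracy condition with constant $\lambda\eta^{-2}e^{-2\omega T}$, an application of Corollary \ref{main}, and the same Jacobian/operator-norm bookkeeping yielding $M_1(T)=c(d)M_0\eta^4e^{4T\omega+\frac{2T}{p}|\mathrm{Tr}(A)|}$. The only cosmetic deviations are your extension of the coefficients outside $[-T,T]$ by $\tilde\lambda I_0$ (the paper freezes the endpoint values) and your componentwise back-transformation via the coefficients $\alpha_{ki}(t)$ instead of the paper's matrix identity $I_0D^2_yv\,I_0=e^{tA^*}I_0D^2_xu\,I_0e^{tA}$.
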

\begin{proof} We fix $T>0$ and 
use a change of variable similar to that  used in page 100 of \cite{DL}. 
  Define $v(t,y) = u(t, e^{tA} y)$, $(t,y) \in
\R^{d+1}$. We have $v \in C_0^{\infty} (\R^{d+1})$,  $u(t,x) = v(t, e^{-tA} x)$ 
and 
\begin{gather*}
 u_t(t,x) + L_0 u(t,x)
 \\ =  v_t(t,e^{-tA} x) - \langle D_y v
 (t, e^{-tA} x) , A e^{-tA} x\rangle  + {\text Tr} \big( e^{-tA}c(t)
 e^{-tA^*} D^2_y  v(t, e^{-tA} x) \big)
 \\
 + \langle D_y v
 (t, e^{-tA} x) , A e^{-tA} x\rangle
\\ 
= v_t(t,e^{-tA} x)    + {\text Tr} \big( e^{-tA}c(t)
 e^{-tA^*} D^2_y  v(t, e^{-tA} x) \big).
\end{gather*}
It follows that
\begin{equation}
\label{chi}
 u_t(t,e^{tA} y) + L_0 u(t,e^{tA} y)
= v_t(t,y)    + {\text Tr} \big( e^{-tA}c(t)
 e^{-tA^*} D^2_y  v(t, y) \big).
\end{equation} 
Now we have to check
Hypothesis \ref{hy1}. We first define $c_0 (t)$, $t \in
\R$,
\begin{equation}
\label{coii}
c_0(t) = e^{-tA}c(t)
 e^{-tA^*},\;\;\; t \in [-T,T],
\end{equation}
$$ c_0(t) = e^{- TA}c(T)
 e^{- TA^*}, \;\; t \ge T,\;\;  c_0(t) = e^{ TA}c(-T)
 e^{ TA^*}, \;\; t \le -T.
$$
Since $v \in C_0^{\infty} (S_T)$ we have on $\R^{d+1}$
$$ 
v_t(t,y)    + {\text Tr} \big( e^{-tA}c(t)
 e^{-tA^*} D^2_y  v(t, y) \big)=
  v_t(t, y)    + {\text Tr} 
  \big(c_0(t) D^2_y  v(t, y)
\big)
$$
and so it is enough to check that  $c_0(t)$ verifies \eqref{nondeg7}. Moreover, by \eqref{coii} it is enough to verify 
\eqref{nondeg7} for $t \in
[-T,T]$. We have
$$
\langle c_0(t) \xi, \xi 
\rangle = 
\langle c(t) e^{-tA^*} \xi, e^{-tA^*}\xi 
\rangle  \ge {\lambda}  \, |I_0 e^{-tA^*}\xi |^2.
$$
By \eqref{hy22} we deduce that $F_{p_0}$ and $F^{p_0}$ are both invariant for
$A^*$. It follows easily that 
\begin{equation} \label{inv}
I_0 e^{sA^*}\xi  =  e^{sA^*} I_0 \xi,\;\;\; \xi \in \R^d, \;\; s\in \R. 
\end{equation} 
Using this fact we find for $t \in [-T, T]$, $\xi \in \R^d$, 
$$
|I_0 \xi|^2 = |I_0 e^{tA^*} e^{-tA^*}  \xi|^2 =
|e^{tA^*} I_0 e^{-tA^*}  \xi|^2 \le \eta^ 2 e^{2 T \omega} \, 
|I_0 e^{-tA^*}  \xi|^2 
$$
and so
$$
{\lambda}  |I_0 \xi|^2 \le {\lambda}  \eta^2 e^{2 T \omega } \, 
|I_0 e^{-tA^*}  \xi|^2  \le \eta^2 e^{2 T \omega} \, 
 \langle c_0(t) \xi, \xi 
\rangle, 
$$
which implies ${\lambda} \eta^{-2}  e^{-2 T \omega } \, |I_0 \xi|^2 \le 
 \langle c_0(t) \xi, \xi 
\rangle $.
 By Corollary \ref{main} and \eqref{chi} we get, for any $i, j =1, 
\ldots p_0$,
\begin{equation} \label{vino}
 \| v_{y_i y_j}  \|_{L^p} 
 = \| \langle D^2_y v(\cdot) e_i, e_j \rangle \|_{L^p}  \le 
\frac{M_0 \, \eta^2  e^{2 T \omega}}{{\lambda} }
 \|  v_t  \, + \, \text{Tr} (c_0(t)D^2_y v) \|_{L^p} 
\end{equation}
$$
=
\frac{M_0 \, \eta^2  e^{2 T \omega}}{{\lambda} }
 \|  u_t(\cdot ,e^{\cdot \, A} \cdot) + L_0 u(\cdot ,e^{\cdot \, A} \cdot
)\|_{L^p} 
\le \frac{M_0 \eta^2 e^{2 T \omega }}{{\lambda} } e^{\frac{T}{p} \, |\text{Tr}(A)|}
 \, \|  u_t + L_0 u\|_{L^p}.
$$
Note that
\begin{gather*}
\langle D^2_y v (t,y) I_0 e_i, I_0 e_j \rangle  = \langle D^2_y v(t,y) e_i, e_j
\rangle  =
\langle e^{tA^*} D^2_x u(t, e^{tA} y)
 e^{tA}  e_i, e_j \rangle
\end{gather*}
and so $I_0  D^2_y v(t,y) I_0 = e^{tA^*} I_0 D^2_x u(t, e^{tA} y)
 I_0 e^{tA}$, $t \in \R,$ $y \in \R^d$. Indicating by $\R^{p_0} \otimes
\R^{p_0}$  the space of all real $p_0 \times p_0$-matrices, we find
\begin{gather*}
\| I_0  D^2_y v \, I_0 \|_{L^p (\R^{d+1}; \R^{p_0} \otimes \R^{p_0})}
\ge  e^{- \frac{T}{p} \, |\text{Tr}(A)|}  \, \| 
 e^{ \cdot \, A^*} I_0 D^2_x u \, 
 I_0 e^{\cdot \, A} \|_{L^p (\R^{d+1}; \R^{p_0} \otimes \R^{p_0})}.
\end{gather*}
Since, for $(t,x) \in \R^{d+1}$, 
$$
\|I_0 D^2_x u (t,x)\, 
 I_0 \| \le  \eta^2 e^{2 T  \omega }
\,  \| e^{ t \, A^*} I_0 D^2_x u (t,x)\, 
 I_0 e^{t A} \|
$$
by \eqref{vino} we deduce
$$
\| I_0  D^2_x u \, I_0 \|_{L^p (\R^{d+1}; \R^{p_0} \otimes \R^{p_0})}
 \le c (d) \, \frac{M_0}{{\lambda} } \, \eta^4 e^{4 T  \omega} \, e^{ \frac{2T}{p} \,
|\text{Tr}(A)|}  \, \|  u_t + L_0 u\|_{L^p}
$$
which gives \eqref{esti1}.
The proof is complete.
 
\end{proof}
   
\begin{example} \label{exa1} {\em The equation  
\begin{equation} \label{eq23}
u_t(t,x,y) + (1+ e^t) u_{xx}(t,x,y) +
 t u_{xy}(t,x,y) + t^2 u_{yy}(t,x,y)   + y u_y (t,x,y) = f(t,x,y),
\end{equation}
 $(t,x,y) \in \R^3$, verifies the assumptions of Theorem \ref{ouma} with $p_0 =1$ and
so estimate \eqref{esti1} holds for $u_{xx} $. }
\end{example} 

\begin{remark} {\em
Assumption \eqref{hy22} does not hold for the degenerate hypoelliptic
operators considered in \cite{BCLP1}. To see this let us consider 
the following   classical example of hypoelliptic operator (cf.
 \cite{H} and \cite{Kol})
\begin{equation}
\label{ko}
u_t(t,x,y) + u_{xx}(t,x,y) + x u_y (t,x,y)
  = f(t,x,y),
\end{equation}
 $(t,x,y) \in \R^3$. In this case $p_0 =1$ and 
$ A = \begin{pmatrix}
0 \;  0
\\
1\; 0
   \end{pmatrix} $. It is clear that \eqref{hy22}
does not hold in this case.  Indeed we can not recover the $L^p$-estimates
in \cite{BCLP1}. }
\end{remark}

As an application of the previous theorem we obtain elliptic estimates for
non-degenerate Ornstein-Uhlenbeck operators ${\cal A}$. These
estimates have been  first proved in \cite{MPRS}. Differently with respect to
\cite{MPRS} in the next result we can show the  explicit dependence of the
constant $C_1$ in \eqref{esti11} from the ellipticity constant ${\lambda} $. 
   
Let 
\begin{equation} \label{ou22}
{\cal A} u(x)      
 = \text{Tr}(Q \,  D^2 u(x)) + \langle Ax, Du(x) \rangle,
\end{equation}
$x \in \R^{d},$ $ u \in  C_{0}^{\infty}(\R^{d})$, where $A$ is a $d \times d $
matrix and  $Q $ is a symmetric
positive define $d \times d $-matrix such that  
\begin{equation} \label{frt}
 \langle Q \xi , \xi \rangle \ge {\lambda}  |\xi|^2,\;\;\; \xi \in \R^d,
\end{equation}  
for some ${\lambda}  >0.$

\begin{corollary}
\label{ouu} Let us consider \eqref{ou22} under assumption \eqref{frt}.
For any   $w \in C^{\infty}_0
(\R^d)$, 
$p \in (1, \infty)$, 
 $i,j =1, \ldots, d$, we have (the constant $M_1(1)$ is given in  \eqref{esti1})
\begin{equation}
\label{esti11}
\|  w_{x_i x_j} \|_{L^p(\R^{d})} \le \, \frac{c(p) \, M_1(1)}{{\lambda}}   \big(
 \| {\cal A} w\|_{L^p(\R^{d})} + \| w\|_{L^p(\R^d)} \big).
\end{equation} 
\end{corollary}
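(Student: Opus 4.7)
The plan is to reduce the elliptic estimate to the parabolic estimate of Theorem \ref{ouma} by introducing a dummy time variable via a cut-off in $t$. Specifically, fix once and for all a nontrivial function $\varphi \in C_0^\infty((-1,1))$ and, given $w \in C_0^\infty(\R^d)$, define
\[
u(t,x) \;=\; \varphi(t)\, w(x), \qquad (t,x)\in\R^{d+1}.
\]
Then $u \in C_0^\infty(S_1)$ with $S_1 = (-1,1)\times\R^d$, and we can hope to apply Theorem \ref{ouma} with $T=1$ and with the constant matrix $c(t)\equiv Q$.

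Before applying the theorem I must check that Hypotheses \ref{hy1} and \ref{hy2} hold in this setting. Since $Q$ is positive definite with $\langle Q\xi,\xi\rangle \ge \lambda|\xi|^2$, Hypothesis \ref{hy1} is satisfied with $p_0 = d$ and the given $\lambda$. With $p_0=d$, the subspace $F^{p_0}$ collapses to $\{0\}$ and $F_{p_0}=\R^d$, so the invariance conditions \eqref{hy22} are automatically verified for any matrix $A$. Hence Theorem \ref{ouma} applies and yields, for every $i,j=1,\ldots,d$,
\[
\|u_{x_i x_j}\|_{L^p(\R^{d+1})} \;\le\; \frac{M_1(1)}{\lambda}\, \|u_t + L_0 u\|_{L^p(\R^{d+1})}.
\]

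A direct computation gives $u_t + L_0 u = \varphi'(t) w(x) + \varphi(t)\, \mathcal{A}w(x)$ and, by Fubini, $\|u_{x_ix_j}\|_{L^p(\R^{d+1})} = \|\varphi\|_{L^p(\R)}\,\|w_{x_ix_j}\|_{L^p(\R^d)}$. The triangle inequality on the right gives
\[
\|u_t + L_0 u\|_{L^p(\R^{d+1})} \;\le\; \|\varphi'\|_{L^p(\R)}\|w\|_{L^p(\R^d)} + \|\varphi\|_{L^p(\R)}\|\mathcal{A}w\|_{L^p(\R^d)}.
\]
Dividing through by $\|\varphi\|_{L^p(\R)}>0$ and setting $c(p) := \max\!\bigl(1,\,\|\varphi'\|_{L^p(\R)}/\|\varphi\|_{L^p(\R)}\bigr)$ (which depends only on the fixed cutoff and on $p$) produces \eqref{esti11}.

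There is no real obstacle; the only small point to watch is that the constant in Theorem \ref{ouma} was stated in the form $M_1(T)/\lambda$ with $T$ explicit, so one simply commits to $T=1$ (any other fixed choice would absorb into $c(p)$). The role of the $\|w\|_{L^p}$ term on the right of \eqref{esti11} is exactly to compensate for the derivative of the cutoff $\varphi'$, which is unavoidable if one wants a purely elliptic estimate from the parabolic one.
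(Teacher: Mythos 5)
Your proof is correct and follows essentially the same route as the paper: multiply $w$ by a fixed time cutoff supported in $(-1,1)$, note that Hypotheses \ref{hy1} and \ref{hy2} hold trivially with $p_0=d$ and $c(t)\equiv Q$, apply Theorem \ref{ouma} with $T=1$, and absorb the cutoff norms into $c(p)$. The paper's proof is just a terser version of the same argument, so nothing further is needed.
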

\begin{proof} We will deduce \eqref{esti11} from \eqref{esti1} 
 in $S_1 = (-1, 1) \times \R^d$ with $p_0 =d$.

Let $\psi \in C^{\infty}_0
(-1,1)$ with $\int_{-1}^1 \psi(t) dt >0 $. We define, similarly to  Section
1.3 of \cite{BCLP1},
$$
u(t,x) = \psi(t) w(x).
$$
Since $u_t + L_0 u = \psi'(t) w(x) + \psi (t) {\cal A} w(x)$,
applying \eqref{esti1} to $u$ we easily get \eqref{esti11}.
 
\end{proof}

%
%
%

\end{document}